\theoremstyle{plain}
\newtheorem{theorem}{Theorem}[section]
\theoremstyle{definition}
\newcommand{\abs}[1]{\lvert#1\rvert}
\newcommand{\norm}[1]{\lVert#1\rVert}
\newcommand{\bigabs}[1]{\bigl\lvert#1\bigr\rvert}
\newcommand{\bignorm}[1]{\bigl\lVert#1\bigr\rVert}
\renewcommand{\mid}{\::\:}
\def\@tvsp{\mathchoice{{}\mkern-4.5mu}{{}\mkern-4.5mu}{{}\mkern-2.5mu}{}}
\def\ltrivert{\left|\@tvsp\left|\@tvsp\left|}
\def\rtrivert{\right|\@tvsp\right|\@tvsp\right|}
\def\bltrivert{\bigl|\@tvsp\bigl|\@tvsp\bigl|}
\def\brtrivert{\bigr|\@tvsp\bigr|\@tvsp\bigr|}
\newcommand{\trinorm}[1]{\ltrivert#1\rtrivert}
\DeclareMathOperator{\supp}{supp}
\DeclareMathOperator{\FVL}{FVL}
\DeclareMathOperator{\FBL}{FBL}
\renewcommand{\le}{\leqslant}
\renewcommand{\ge}{\geqslant}
\begin{document}

\title{Simple constructions of $\mathrm{FBL}(A)$ and $\mathrm{FBL}[E]$}

\author{V.G. Troitsky}
\address{Department of Mathematical and Statistical Sciences,
         University of Alberta, Edmonton, AB, T6G\,2G1, Canada.}
\email{troitsky@ualberta.ca}

\thanks{The author was supported by an NSERC grant.}
\keywords{free vector lattice, free Banach lattice}
\subjclass[2010]{Primary: 46B42. Secondary: 46A40}

\date{\today}

\begin{abstract}
  We show that the free Banach lattice $\mathrm{FBL}(A)$ may be
  constructed as the completion of $\mathrm{FVL}(A)$ with respect to
  the maximal lattice seminorm $\nu$ on $\mathrm{FVL}(A)$ with
  $\nu(a)\le 1$ for all $a\in A$. We present a similar construction
  for the free Banach lattice $\mathrm{FBL}[E]$ generated by a Banach
  space $E$.
\end{abstract}

\maketitle

\section{Preliminaries}

The free vector lattice over a set $A$, denoted by $\FVL(A)$, goes
back to~\cite{Birkhoff:42}. More recently, a free Banach lattice
$\FBL(A)$ has been introduced and investigated;
see~\cite{dePagter:15,Aviles:18}. It has been folklore knowledge (and
was implicitly mentioned in \cite{dePagter:15,Aviles:18}) that the
norm of $\FBL(A)$ is, in some sense, the greatest lattice norm one can
put on $\FVL(A)$. In this note, we make this idea into a formal
statement and provide a direct proof. This yields an alternative way
of constructing $\FBL(A)$ and $\FBL[E]$.

Let $A$ be a subset of a vector lattice~$X$. We say that $X$ is a free
vector lattice over $A$ if every function $\varphi\colon A\to Y$,
where $Y$ is an arbitrary vector lattice, extends uniquely to a
lattice homomorphism $\tilde\varphi\colon X\to Y$. For every set $A$
there is a vector lattice $X$ which contains $A$ and is free
over~$A$. It is easy to see that if $X_1$ and $X_2$ are both free over
$A$ then there exists a lattice isomorphism between $X_1$ and $X_2$
which fixes~$A$. So a free vector lattice over $X$ is determined
uniquely up to a lattice isomorphism; we denote it by $\FVL(A)$.

We outline below a construction of $\FVL(A)$ and some of its basic
properties; we refer the reader to~\cite{Bleier:73,dePagter:15} for
further details on free vector lattices. Given a set~$A$.  For every
$a\in A$, we write $\delta_a$ for the ``evaluation functional'' of $a$
in the following sense: $\delta_a\colon\mathbb R^A\to\mathbb R$ with
$\delta_a(x)=x(a)$ for $x\colon A\to\mathbb R$. Then $\FVL(A)$ may be
identified with the sublattice of $\mathbb R^{{\mathbb R}^A}$
generated by $\{\delta_a\mid a\in A\}$. Identifying $a$
with~$\delta_a$, one may view $A$ as a subset of $\FVL(A)$. Since
$\FVL(A)$ is a sublattice $\mathbb R^{{\mathbb R}^A}$, and the latter
is Archimedean, $\FVL(A)$ is also Archimedean.

It is easy to see that if $a_1,\dots,a_n\in A$ then
$\FVL\bigl(\{a_1,\dots,a_n\}\bigr)$ may be identified with the
sublattice of $\FVL(A)$ generated by $a_1,\dots,a_n$. For every
$f\in\FVL(A)$ there exists a finite subset $\{a_1,\dots,a_n\}$ of $A$
such that $f$ belongs to the sublattice of $\FVL(A)$ generated by
$a_1,\dots,a_n$.  Furthermore, if $A$ itself is finite, say,
$A=\{a_1,\dots,a_n\}$, then $\bigvee_{k=1}^n\abs{a_k}$ is a strong
unit in $\FVL(A)$.

By a lattice-linear expression, we mean an expression formed by
finitely many variables and linear and lattice operations.  For
example, $F(t_1,t_2,t_3)=t_1\wedge t_2+t_1\vee(2t_3)$ is a
lattice-linear expression. Clearly, a lattice-linear expression
$F(t_1,\dots,t_n)$ induces a positively homogeneous function from
$\mathbb R^n$ to~$\mathbb R$. On the other hand, if $X$ is an
Archimedean vector lattice and $x_1,\dots,x_n\in X$, plugging
$x_1,\dots,x_n$ into $F$ instead of $t_1,\dots,t_n$, we can define
$F(x_1,\dots,x_n)$ as an element of $X$ in a natural way. We say that
$F(x_1,\dots,x_n)$ is a lattice-linear combination of $x_1,\dots,x_n$.
If two lattice-linear expressions $F$ and $G$ agree as functions from
$\mathbb R^n$ to $\mathbb R$ then
$F(x_1,\dots,x_n)=G(x_1,\dots,x_n)$. Actually, the calculus of
lattice-linear expressions in $X$ is a restriction of Krivine's
function calculus; see, e.g., \cite[Proposition~3.6]{Buskes:91}.
Observe that the sublattice of $X$ generated by $x_1,\dots,x_n$ is
exactly the set of all lattice-linear combinations of
$x_1,\dots,x_n$. $\FVL(A)$ may be interpreted as the set of all formal
lattice-linear expressions of elements of~$A$, where we identify two
expressions if they agree as functions from $\mathbb R^n$ to
$\mathbb R$. For example, we identify $a_1+(a_2\vee a_3)$ and
$(a_1+a_2)\vee(a_1+a_3)$. Formally speaking, $\FVL(A)$ consists of
equivalence classes of lattice-linear expressions.

In~\cite{dePagter:15}, the concept of a free Banach lattice was
introduced. Let $A$ be a subset of a Banach lattice~$X$. We say that
$X$ is a free Banach lattice over a set $A$ if every function
$\varphi\colon A\to Y$, where $Y$ is an arbitrary Banach lattice,
satisfying $\sup_{a\in A}\bignorm{\varphi(a)}\le 1$ extends uniquely
to a lattice homomorphism $\tilde\varphi\colon X\to Y$ with
$\norm{\tilde\varphi}\le 1$. It was shown in~\cite{dePagter:15} that
for every set $A$ there is a Banach lattice $X$ which contains $A$ and
is free over~$A$. Again, it is easy to see that such a Banach lattice
is unique up to a lattice isometry which fixes $A$; we denote it by
$\FBL(A)$.  It is easy to see that $A$ is a subset of the unit sphere
of $\FBL(A)$.

An alternative way of constructing $\FBL(A)$ was recently obtained
in~\cite{Aviles:18}. In~\cite{Aviles:18}, the authors also prove that
for every Banach space $E$ there exists a Banach lattice $\FBL[E]$
such that $E$ is a closed subspace of $\FBL[E]$ and every bounded
operator $T\colon E\to Y$, where $Y$ is an arbitrary Banach lattice,
extends uniquely to a lattice homomorphism
$\widetilde{T}\colon\FBL[E]\to Y$ with
$\norm{\widetilde{T}}=\norm{T}$. It is easy to see that $\FBL[E]$ is
unique up to a lattice isometry preserving~$E$. Furthermore, it can be
easily verified that $\FBL(A)=\FBL\bigl[\ell_1(A)]\bigr]$ for any set
$A$.

In this note, we present constructions of $\FBL(A)$ and $\FBL[E]$
that are somewhat easier than those in~\cite{dePagter:15,Aviles:18}.

\section{A construction of $\FBL(A)$}
\label{sec:FBLset}

\begin{theorem}
  There exists a maximal lattice seminorm $\nu$ on $\mathrm{FVL}(A)$
  with $\nu(a)\le 1$ for all $a\in A$. It is a lattice norm, and the
  completion of $\FVL(A)$ with respect to it is $\mathrm{FBL}(A)$.
\end{theorem}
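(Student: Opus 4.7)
The plan is to define $\nu$ as the pointwise supremum of the family $S$ of all lattice seminorms $\rho$ on $\FVL(A)$ satisfying $\rho(a)\le 1$ for every $a\in A$, and then verify it has the claimed properties. The only non-routine step in showing $\nu$ is a well-defined lattice seminorm is finiteness: given $f\in\FVL(A)$, write $f=F(a_1,\dots,a_n)$ for a lattice-linear expression $F$ and some $a_i\in A$. Since $F$ is piecewise linear and positively homogeneous on $\mathbb R^n$, there is a constant $C_F$ with $\abs{F(t)}\le C_F\bigvee_i\abs{t_i}$ pointwise, and by the Krivine-style calculus recalled in the preliminaries this transfers to the inequality $\abs{f}\le C_F\bigvee_i\abs{a_i}$ in $\FVL(A)$. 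For any $\rho\in S$ this yields $\rho(f)\le C_F\sum_i\rho(a_i)\le C_Fn$, so $\nu(f)$ is finite. A pointwise supremum of lattice seminorms is again a lattice seminorm, so $\nu$ lies in $S$ and is, by construction, its maximum.

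Next I would show $\nu$ is a norm by producing, for each nonzero $f\in\FVL(A)$, a concrete element of $S$ that does not vanish on $f$. Viewing $\FVL(A)$ as the sublattice of $\mathbb R^{\mathbb R^A}$ generated by the $\delta_a$'s, a nonzero $f$ satisfies $f(x)\ne 0$ for some $x\in\mathbb R^A$; since $f$ depends on only finitely many coordinates and is positively homogeneous, I may replace $x$ by a rescaled, finitely supported $x_0\in B_{\ell_\infty(A)}$ with $f(x_0)\ne 0$. Then $g\mapsto\abs{g(x_0)}$ is a lattice seminorm on $\FVL(A)$ taking the value $\abs{x_0(a)}\le 1$ at each $a$, hence it lies in $S$, and so $\nu(f)\ge\abs{f(x_0)}>0$.

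For the last statement, let $X$ denote the completion of $\FVL(A)$ with respect to $\nu$. Because $\nu$ is a lattice norm the lattice operations are uniformly continuous and extend to $X$, making $X$ a Banach lattice with $\FVL(A)$ as a dense sublattice. To verify the universal property defining $\FBL(A)$, take any Banach lattice $Y$ and any $\varphi\colon A\to Y$ with $\sup_a\norm{\varphi(a)}\le 1$; the universal property of $\FVL(A)$ extends $\varphi$ to a unique lattice homomorphism $\widehat\varphi\colon\FVL(A)\to Y$. The map $f\mapsto\norm{\widehat\varphi(f)}_Y$ is a lattice seminorm in $S$, hence is dominated by $\nu$, so $\widehat\varphi$ is $\nu$-contractive and extends uniquely by density to a contractive lattice homomorphism $\widetilde\varphi\colon X\to Y$. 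Specializing to $Y=\mathbb R$ with $\varphi$ the indicator of a single $a\in A$ shows in addition $\nu(a)\ge 1$, so $A$ sits on the unit sphere of $X$. These are exactly the defining properties of $\FBL(A)$, so $X=\FBL(A)$.

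The main obstacle I anticipate is precisely the finiteness estimate in the first paragraph, which hinges on dominating an arbitrary element of $\FVL(A)$ by a multiple of $\bigvee_i\abs{a_i}$ for its generators by means of the lattice-linear calculus; once this estimate is available, the remaining parts of the argument are largely direct applications of the two universal properties.
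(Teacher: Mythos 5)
Your proposal is correct and is essentially the paper's own argument: the same supremum over the family of lattice seminorms bounded by $1$ on the generators, the same evaluation seminorms $f\mapsto\abs{f(x)}$ for finitely supported $x$ with $\abs{x(a)}\le 1$ to prove nondegeneracy, and the same verification of the universal property via the seminorm $f\mapsto\bignorm{\hat\varphi(f)}$ together with density for uniqueness. Your finiteness estimate $\abs{f}\le C_F\bigvee_i\abs{a_i}$ is just an unpacking of the paper's appeal to $\abs{\delta_{a_1}}+\dots+\abs{\delta_{a_n}}$ being a strong unit in $\FVL\bigl(\{a_1,\dots,a_n\}\bigr)$.
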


\begin{proof}
As before, we
identify $\FVL(A)$ with the sublattice of $\mathbb R^{{\mathbb R}^A}$
generated by $\{\delta_a\mid a\in A\}$; by identifying $a\in A$ with
$\delta_a\in\FVL(A)$, we may view $A$ as a subset of $\FVL(A)$. Let
$\mathcal N$ be the set of all lattice seminorms $\nu$ on $\FVL(A)$
such that $\nu(\delta_a)\le 1$ for every $a\in A$.

Let $x\in\mathbb R^A$ such that $\abs{x(a)}\le 1$ for all $a\in A$. For
$f\in\FVL(A)$, put $\nu_x(f)=\bigabs{f(x)}$. It can be easily verified
that $\nu_x\in\mathcal N$.

For $f\in\FVL(A)$, put
\begin{math}
  \norm{f}=\sup\limits_{\nu\in\mathcal N}\nu(f).
\end{math}
We claim that this is a lattice norm on $\FVL(A)$.

First, observe that $\norm{f}$ is finite. Find $a_1,\dots,a_n\in A$
such that $f\in\FVL\bigl(\{a_1,\dots,a_n\}\bigr)$. Since
\begin{math}
  \abs{\delta_{a_1}}+\dots+\abs{\delta_{a_n}}
\end{math}
is a strong unit in $\FVL\bigl(\{a_1,\dots,a_n\}\bigr)$, there exists
$\lambda\in\mathbb R_+$ such that
\begin{displaymath}
  \abs{f}\le\lambda\bigl(\abs{\delta_{a_1}}+\dots+\abs{\delta_{a_n}}\bigr).
\end{displaymath}
It follows that
\begin{math}
  \nu(f)\le\lambda n
\end{math}
for every $\nu\in\mathcal N$, hence $\norm{f}\le\lambda n<\infty$.

It is straightforward that $\norm{\cdot}$ is positively
homogeneous. To verify the triangle inequality, let
$f,g\in\FVL(A)$ and fix $\varepsilon>0$. There exists
$\nu\in\mathcal N$ such that
\begin{displaymath}
  \norm{f+g}-\varepsilon<\nu(f+g)\le\nu(f)+\nu(g)
  \le\norm{f}+\norm{g}.
\end{displaymath}
It follows that $\norm{f+g}\le\norm{f}+\norm{g}$.

Suppose that $f\ne 0$. Find $a_1,\dots,a_n\in A$ such that
$f\in\FVL\bigl(\{a_1,\dots,a_n\}\bigr)$. It follows that there is
$x\in\mathbb R^A$ such that $f(x)\ne 0$ and
$\supp x\subseteq\{a_1,\dots,a_n\}$. Without loss of generality,
scaling $x$ if necessary, $\abs{x(a)}\le 1$ for all $a\in A$. Then
$\nu_x(f)=\bigabs{f(x)}> 0$, hence $\norm{f}\ne 0$.

If $\abs{f}\le\abs{g}$ in $\FVL(A)$ then $\nu(f)\le\nu(g)$ for every
$\nu\in\mathcal N$, hence $\norm{f}\le\norm{g}$. Thus, $\norm{\cdot}$
is a lattice norm on $\FVL(A)$.

Let $X$ be the completion of $\bigl(\FVL(A),\norm{\cdot}\bigr)$. We
claim that $X$ is a free Banach lattice over~$A$.

Let $\varphi\colon A\to Y$, where $Y$ is a Banach lattice and
$\sup_{a\in A}\norm{\varphi(a)}\le 1$. Then $\varphi$ extends
to a lattice homomorphism $\hat\varphi\colon\FVL(A)\to Y$. It
suffices to show that $\norm{\hat\varphi}\le 1$; it would follow
that $\hat\varphi$ extends to a contractive lattice homomorphism
$\tilde\varphi$ from $X$ to~$Y$. For $f\in\FVL(A)$, put
$\nu(f)=\bignorm{\hat\varphi(f)}$. It is easy to see that
$\nu\in\mathcal N$, hence
$\bignorm{\hat\varphi(f)}=\nu(f)\le\norm{f}$.

Uniqueness of the extension follows from the fact that any lattice
homomorphism extension of $\varphi$ to $X$ has to agree with
$\hat\varphi$ on $\FVL(A)$, hence with $\tilde\varphi$ on $X$ as
$\FVL(A)$ is dense in~$X$.
\end{proof}

It follows that the FBL norm is the greatest norm on $\FVL(A)$ such
that $\norm{a}\le 1$ for every $a\in A$.

\section{A construction of $\FBL[E]$}
\label{sec:FBLspace}

For a Banach lattice $E$ and a vector $x\in E$, the evaluation
functional $\hat x\in E^{**}$ is defined by $\hat x(x^*)=x^*(x)$ for
$x^*\in X^*$. In particular, $\hat x$ is a function from $E^*$
to~$\mathbb R$, i.e., an element of $\mathbb R^{E^*}$.

\begin{theorem}
  Let $E$ be a Banach space; let $L$ be the sublattice of
  $\mathbb R^{E^*}$ generated by $\{\hat x\mid x\in E\}$. There is a
  maximal lattice seminorm $\nu$ on $L$ satisfying
  $\nu(\hat x)\le\norm{x}$ for all $x\in E$. It is a lattice norm; the
  completion of $L$ with respect to it is $\mathrm{FBL}[E]$.
\end{theorem}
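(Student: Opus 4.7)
The plan is to follow the template of Theorem~1, inserting the linear structure of~$E$ at the appropriate places. Let $\mathcal N$ be the set of all lattice seminorms $\nu$ on $L$ with $\nu(\hat x)\le\norm{x}$ for every $x\in E$; for each $x^*\in E^*$ with $\norm{x^*}\le 1$, the pointwise evaluation $\nu_{x^*}(f):=\bigabs{f(x^*)}$ lies in $\mathcal N$, since $\nu_{x^*}(\hat x)=\abs{x^*(x)}\le\norm{x}$. Put $\norm{f}:=\sup_{\nu\in\mathcal N}\nu(f)$. The verification that $\norm{\cdot}$ is a lattice norm on~$L$ would mirror Theorem~1 almost verbatim: finiteness uses that each $f\in L$ lies in the sublattice generated by some $\hat{x_1},\dots,\hat{x_n}$, so $\abs{f}\le\lambda\sum_i\abs{\hat{x_i}}$ for a suitable~$\lambda$; positive homogeneity, the triangle inequality, and the lattice property are inherited from the individual~$\nu$; and non-degeneracy uses the $\nu_{x^*}$'s after noting that if $f\ne 0$ in $L\subseteq\mathbb R^{E^*}$, then $f(x^*)\ne 0$ for some $x^*\in E^*$, which can be rescaled to $\norm{x^*}\le 1$ by positive homogeneity of~$f$. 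Let $X$ denote the Banach lattice completion.

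Before turning to the universal property, I would check that $x\mapsto\hat x$ is an isometric linear embedding of $E$ into~$X$: linearity is automatic from the pointwise operations on $\mathbb R^{E^*}$; $\norm{\hat x}\le\norm{x}$ is built into~$\mathcal N$; and a Hahn--Banach functional $x^*$ of norm one with $x^*(x)=\norm{x}$ gives $\nu_{x^*}(\hat x)=\norm{x}$, so $\norm{\hat x}=\norm{x}$.

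Given a bounded operator $T\colon E\to Y$ into a Banach lattice~$Y$, I would define $\hat T\colon L\to Y$ by plugging $Tx_i$ into any lattice-linear representation $F(\hat{x_1},\dots,\hat{x_n})$ of an element of~$L$. The hard part will be well-definedness: distinct representations $F(\hat{x_1},\dots,\hat{x_n})=G(\hat{y_1},\dots,\hat{y_m})$ of the same element of $L$ may involve completely different generators. To handle this, I would rewrite both expressions in terms of a common basis $e_1,\dots,e_k$ of the linear span of $\{x_i\}\cup\{y_j\}$ in~$E$; linearity of $\hat\cdot$ converts the assumed equality into $\tilde F(\hat{e_1},\dots,\hat{e_k})=\tilde G(\hat{e_1},\dots,\hat{e_k})$ in~$L$. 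Linear independence of the $e_j$'s together with Hahn--Banach makes the map $x^*\mapsto(x^*(e_1),\dots,x^*(e_k))$ surjective onto~$\mathbb R^k$, forcing $\tilde F=\tilde G$ as functions $\mathbb R^k\to\mathbb R$. By the identity principle for lattice-linear expressions in Archimedean vector lattices recalled in Section~1, we obtain $\tilde F(Te_1,\dots,Te_k)=\tilde G(Te_1,\dots,Te_k)$ in~$Y$, and unfolding via linearity of $T$ recovers $F(Tx_1,\dots,Tx_n)=G(Ty_1,\dots,Ty_m)$, as required.

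With $\hat T$ well-defined, it is visibly a lattice homomorphism extending~$T$. To bound its norm, observe that $\nu(f):=\bignorm{\hat T(f)}$ is a lattice seminorm on $L$ satisfying $\nu(\hat x)\le\norm{T}\norm{x}$, hence $\tfrac{1}{\norm{T}}\nu\in\mathcal N$ (the case $T=0$ being trivial), so $\bignorm{\hat T(f)}\le\norm{T}\norm{f}$. Extending by continuity produces $\tilde T\colon X\to Y$ with $\bignorm{\tilde T}\le\norm{T}$; the isometric embedding of $E$ supplies the reverse inequality, and density of $L$ in $X$ yields uniqueness, exactly as in Theorem~1.
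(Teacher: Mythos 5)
Your proposal is correct and follows essentially the same route as the paper: the same family of lattice seminorms dominated on the generators, the same supremum norm with finiteness via strong units and non-degeneracy via point evaluations $\nu_{x^*}$, and the same extension $\widehat T$ with its norm controlled by the induced seminorm $f\mapsto\bignorm{\widehat Tf}$. Your treatment of well-definedness of $\widehat T$ is slightly more explicit than the paper's (which simply assumes WLOG a single linearly independent generating tuple), but the key idea --- linear independence plus Hahn--Banach forcing the two lattice-linear expressions to coincide as functions on $\mathbb R^k$ --- is identical.
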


\begin{proof}  
  It is easy to see that the map $x\in E\mapsto\hat x\in L$ is a
  linear embedding, so that we may view $E$ as a linear subspace
  of~$L$. Let $\mathcal M$ be the set of all lattice seminorms $\nu$
  on $L$ such that $\nu(\hat x)\le\norm{x}$ for all $x\in E$.

For every $x^*\in B_{E^*}$, define $\nu_{x^*}(f)=\bigabs{f(x^*)}$ for
$f\in L$. It is easy to see that $\nu_{x^*}$ is a seminorm
on~$L$. Note that $\nu_{x^*}$ is a lattice seminorm because
$\abs{f}(x^*)=\bigabs{f(x^*)}$. Furthermore, if $x\in E$ then
\begin{math}
  \nu_{x^*}(\hat x)=\bigabs{\hat x(x^*)}=\bigabs{x^*(x)}\le\norm{x},
\end{math}
so that $\nu_{x^*}\in\mathcal M$. 

For $f\in L$, define $\trinorm{f}=\sup_{\nu\in\mathcal M}\nu(f)$. We
claim that $\trinorm{\cdot}$ is a lattice norm on~$L$. First, we will
show that it is finite. Let $f\in L$. Then $f$ is a lattice-linear
expression of $\hat x_1,\dots,\hat x_n$ for some $x_1,\dots,x_n$
in~$E$. Since lattice-linear functions are positively homogeneous, we
may assume without loss of generality that $x_1,\dots,x_n\in
B_E$. Clearly, $\abs{\hat x_1}+\dots+\abs{\hat x_n}$ is a strong unit
in the sublattice of $L$ generated by $\hat x_1,\dots,\hat x_n$. It
follows that
\begin{math}
  \abs{f}\le\lambda\bigl(\abs{\hat
  x_1}+\dots+\abs{\hat x_n}\bigr)
\end{math}
for some $\lambda>0$, so that $\nu(f)\le\lambda n$ for every
$\nu\in\mathcal M$, hence $\trinorm{f}\le\lambda n<\infty$.

It is straightforward that $\trinorm{\cdot}$ is a lattice seminorm
on~$L$. Suppose that $0\ne f\in L$. Then $f(x^*)\ne 0$ for some
$x^*\in E^*$. Since $f$ is positively homogeneous, we may assume
without loss of generality that $x^*\in B_{E^*}$. Then
$\trinorm{f}\ge \nu_{x^*}(f)=\bigabs{f(x^*)}>0$.  Thus,
$\trinorm{\cdot}$ is a lattice norm on~$L$.

Note also that $\trinorm{\hat x}=\norm{x}$ for all $x\in E$, so that
we may view $\trinorm{\cdot}$ as an extension of $\norm{\cdot}$ from
$E$ to~$L$. Indeed, $\nu(\hat x)\le\norm{x}$ for every $\nu\in\mathcal
M$, hence $\trinorm{\hat x}\le\norm{x}$. On the other hand, let
$x^*\in B_{E^*}$ such that $x^*(x)=\norm{x}$; then
\begin{math}
  \trinorm{\hat x}\ge\nu_{x^*}(\hat x)=\bigabs{\hat x(x^*)}=\norm{x}.
\end{math}

Let $X$ be the completion of $\bigl(L,\trinorm{\cdot}\bigr)$. We claim
that $X=\FBL[E]$.

Let $T\colon E\to Y$ be a linear operator from $E$ to an arbitrary
Banach lattice $Y$ with $\norm{T}=1$. We define
$\widehat{T}\colon L\to Y$ as follows. Let $f\in L$. Then $f$ is a
lattice-linear combination of $\hat x_1,\dots,\hat x_n$ for some
$x_1,\dots,x_n\in E$. Without loss of generality, $x_1,\dots,x_n$ are
linearly independent in~$E$. We define $\widehat{T}f$ to be the same
lattice-linear combination of $Tx_1,\dots,Tx_n$ in~$Y$. That is,
suppose that $f=F(\hat x_1,\dots,\hat x_n)$ for some formal
lattice-linear expression $F(t_1,\dots,t_n)$; we then put
$\widehat{T}f=F(Tx_1,\dots,Tx_n)$.  Note that $\widehat{T}f$ is
well-defined, i.e., does not depend on a particular choice of a
lattice-linear combination representing~$f$. Indeed, suppose that
$f=G(\hat x_1,\dots,\hat x_n)$, where $G(t_1,\dots,t_n)$ is another
formal lattice-linear expressions. Since $L$ is a sublattice of
$\mathbb R^{E^*}$, the lattice operations in $L$ are point-wise, hence
\begin{displaymath}
  f(x^*)
  =F\bigl(\hat x_1(x^*),\dots,\hat x_n(x^*)\bigr)
  =F\bigl(x^*(x_1),\dots,x^*(x_n)\bigr)
\end{displaymath}
in $\mathbb R$ for every $x^*\in X^*$. Similarly,
\begin{math}
  f(x^*)=G\bigl(x^*(x_1),\dots,x^*(x_n)\bigr).
\end{math}
Since
$x_1,\dots,x_n$ are linearly independent, this means that
\begin{math}
  F(t_1,\dots,t_n)=G(t_1,\dots,t_n)
\end{math}
for all $t_1,\dots,t_n\in\mathbb R$.
Therefore, $\widehat{T}$ is well-defined.

The definition of $\widehat{T}$ immediately yields that it is a
lattice homomorphism. Clearly, $\widehat{T}$ extends $T$ in the sense
that $\widehat{T}\hat x=Tx$ for every $x\in E$. It follows that
$\norm{\widehat{T}}\ge 1$. We claim that that
$\norm{\widehat{T}}=1$. Indeed, for $f\in L$, define
$\nu(f)=\norm{\widehat{T}f}$ in~$Y$. It is easy to see that $\nu$ is a
lattice seminorm on~$L$. For every $x\in X$, one has
\begin{math}
  \nu(\hat x)=\norm{\widehat{T}\hat x}=\norm{Tx}\le\norm{x}.
\end{math}
It follows that $\nu\in\mathcal M$, so that
\begin{math}
  \norm{\widehat{T}f}=\nu(f)\le\trinorm{f},
\end{math}
so that $\norm{\widehat{T}}\le 1$. It follows that $\widehat{T}$
extends to a contractive lattice homomorphism $\widetilde{T}\colon
X\to Y$.

Again, uniqueness of the extension follows from the fact that any
contractive lattice homomorphism that extends $T$ to $X$ has to agree
with $\widehat{T}$ on $L$ and, therefore, with $\widetilde{T}$ on~$X$.
\end{proof}

\end{document}